\newtheorem{theorem}{Theorem}[section]
\newtheorem{lemma}[theorem]{Lemma}
\newtheorem{proposition}[theorem]{Proposition}
\newtheorem{definition}[theorem]{Definition}
\newcommand{\from}{\colon}
\begin{document}
\title[Cofinite Connectedness and Cofinite Group Actions]{Cofinite Connectedness and Cofinite Group Actions}

\author{Amrita Acharyya}
\address{Department of Mathematics and Statistics\\
University of Toledo, Main Campus\\
Toledo, OH 43606-3390}
\email{Amrita.Acharyya@utoledo.edu}

\author{Jon M. Corson}
\address{Department of Mathematics\\
University of Alabama\\
Tuscaloosa, AL 35487-0350}
\email{jcorson@ua.edu}

\author{Bikash Das}
\address{Department of Mathematics\\
University of North Georgia, Gainesville Campus\\
Oakwood, Ga. 30566}
\email{Bikash.Das@ung.edu}

\subjclass[2010]{05C63, 54F65, 57M15, 20E18}

\keywords{profinite graph, cofinite graph, profinite group, cofinite group, uniform space, completion, cofinite entourage, Caley graph, group action, cofinite connectedness}

\begin{abstract}
We have defined and established a theory of cofinite connectedness of a cofinite graph. Many of the properties of connectedness of topological spaces have analogs for cofinite connectedness. We have seen that if $G$ is a cofinite group and  $\Gamma=\Gamma(G,X)$ is the Cayley graph. Then $\Gamma$ can be given a suitable cofinite uniform topological structure so that $X$ generates $G$, topologically iff $\Gamma$ is cofinitely connected. 

Our immediate next concern is developing group actions on cofinite graphs. Defining the action of an abstract group over a cofinite graph in the most natural way we are able to characterize a unique way of uniformizing an abstract group with a cofinite structure, obtained from the cofinite structure of the graph in the underlying action, so that the afore said action becomes uniformly continuous. 
\end{abstract}

\maketitle


\section{Introduction} \label{s:Intro}

A cofinite graph $\Gamma$ is said to be {\it cofinitely connected\/} if for each compatible cofinite equivalence relation $R$ on $\Gamma$, the quotient graph $\Gamma/R$ is path connected. 

Similar to the standard connectedness arguements for finite graphs or general topological spaces we were able to establish that the following statements are equivalent for any cofinite graph $\Gamma$:
\begin{enumerate}
\item $\Gamma$ is cofinitely connected;
\item $\Gamma$ is not the union of two disjoint nonempty subgraphs. 
\end{enumerate}
As an immediate consequence we obtained the following generalized characterization of connected Cayley graphs of cofinite groups:

Let $G$ be a cofinite group and let $\Gamma=\Gamma(G,X)$ be the Cayley graph. Then $\Gamma$ can be given a suitable cofinite topological graph structure so that $X$ generates $G$ $($topologically$)$  iff $\Gamma$ is cofinitely connected. 

Our final section is concerned with cofinite group actions on cofinite graphs. 

A group $G$ is said to act uniformly equicontinuously over a cofinite graph $\Gamma$ if and only if for each entourage $W$ over $\Gamma$ there exists an entourage $V$ over $\Gamma$ such that for all $g$ in $G, (g\times g)[V]\subseteq W$. In this case the group action induces a (Hausdorff) cofinite uniformity over $G$ if and only if the aforesaid action is faithful. 

We say that a group $G$ acts on a cofinite graph $\Gamma$ residually freely, if there exists a fundamental system of $G$-invariant compatible cofinite entourages $R$ over $\Gamma$ such that the induced group action of $G/N_R$ over $\Gamma/R$ is a free action, where $N_R$ is the Kernel of the action of $G$ on $\Gamma/R$.

Suppose that $G$ is a group acting faithfully and uniformly equicontinuously on a cofinite graph $\Gamma$, then the action $G\times\Gamma\to\Gamma$ is uniformly continuous. Also in that case $\widehat{G}$ acts on $\widehat{\Gamma}$ uniformly equicontinuously.


\section{Connected Cofinite Graphs}
A {\it path\/} in a graph $\Gamma$ is a finite string of edges $p=e_1\cdots e_n\in E(\Gamma)^*$ such that $t(e_i)=s(e_{i+1})$ for $1\le i\le n-1$. The {\it source\/} and {\it target\/} of this path $p$ are the vertices $s(p)=s(e_1)$ and $t(p)=t(e_n)$. We say that $\Gamma$ is {\it path connected\/} if there is a path in $\Gamma$ joining any two vertices. 

\begin{definition}\rm
A cofinite graph $\Gamma$ is {\it cofinitely connected\/} if for each compatible cofinite equivalence relation $R$ on $\Gamma$, the quotient graph $\Gamma/R$ is path connected. 
\end{definition}

\begin{proposition}
The following statements are equivalent for any cofinite graph $\Gamma$:
\begin{enumerate}
\item $\Gamma$ is cofinitely connected;
\item $\Gamma$ is not the uniform sum of two disjoint nonempty subgraphs. We then note that if $\Gamma$ is a profinite graph then we can restate the condition as $\Gamma$ is not the disjoint union of two nonempty closed subgraphs.  
\item the completion $\overline\Gamma$ of $\Gamma$ is cofinitely connected.
\end{enumerate}
\end{proposition}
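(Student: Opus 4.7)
The plan is to establish $(1) \Leftrightarrow (2)$ directly by the kind of contrapositive argument familiar from finite/topological connectedness, and then derive $(1) \Leftrightarrow (3)$ from the bijective correspondence between compatible cofinite equivalence relations on $\Gamma$ and those on the completion $\overline\Gamma$.

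For $(1) \Rightarrow (2)$ I would argue by contrapositive. Assume $\Gamma$ is the uniform sum of two nonempty disjoint subgraphs $A$ and $B$. Unpacking the uniform sum, the canonical inclusions of $A$ and $B$ are uniform embeddings onto complementary subspaces, which supplies a compatible cofinite equivalence relation $R$ on $\Gamma$ with $R \subseteq (A\times A)\cup (B\times B)$. (If the witnessing entourage is not itself a compatible equivalence relation, replace it by the largest compatible cofinite equivalence relation it contains, which still separates $A$ and $B$.) The quotient $\Gamma/R$ then splits as $A/R \sqcup B/R$ with no edges between the two pieces, so it cannot be path connected. For $(2) \Rightarrow (1)$, assume some $\Gamma/R$ fails to be path connected, pick one path component $C$, and put $C' = (\Gamma/R)\setminus C$. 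Neither has an edge into the other (such an edge would join their endpoints by a length-one path), so both are subgraphs. Their $R$-saturated preimages in $\Gamma$ give a disjoint decomposition into nonempty subgraphs, and $R$ itself is the entourage that realizes this as a uniform sum.

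The profinite restatement in (2) is a routine observation: in a profinite graph the clopen (equivalently, entourage-separated) subgraphs coincide with closed subgraphs, since the cofinite entourages form a fundamental system of the uniformity and the clopen sets are exactly the unions of equivalence classes of some such entourage.

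For $(1) \Leftrightarrow (3)$ I would invoke the correspondence between compatible cofinite equivalence relations on $\Gamma$ and those on $\overline\Gamma$ (established in the earlier development of cofinite completions), under which each relation $R$ on $\Gamma$ extends uniquely to a relation $\overline R$ on $\overline\Gamma$ with the canonical isomorphism $\Gamma/R \cong \overline\Gamma/\overline R$ of finite graphs. Since cofinite connectedness is defined solely in terms of path connectedness of these finite quotient graphs, (1) and (3) are literally the same condition under this bijection.

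The main obstacle I anticipate is the step in $(1)\Rightarrow(2)$ of upgrading the entourage witnessing the uniform sum to a compatible cofinite equivalence relation separating $A$ from $B$; once this is in hand, the remaining implications are mostly bookkeeping using definitions and the completion correspondence.
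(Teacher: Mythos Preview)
Your proposal is correct and matches the paper's approach in substance; the paper organizes the argument as a cycle $(1)\Rightarrow(2)\Rightarrow(3)\Rightarrow(1)$ rather than two separate equivalences, but the underlying ideas are the same (a uniform-sum decomposition yields a separating compatible cofinite entourage, and the completion correspondence $\Gamma/R\cong\overline\Gamma/\overline R$ transfers cofinite connectedness between $\Gamma$ and $\overline\Gamma$). Regarding your anticipated obstacle in $(1)\Rightarrow(2)$: the paper handles it directly from the definition of uniform sum---given $\Gamma=\Gamma_1\amalg\Gamma_2$, choose compatible cofinite entourages $R_{\Gamma_1}$ on $\Gamma_1$ and $S_{\Gamma_2}$ on $\Gamma_2$, and then $W=R_{\Gamma_1}\cup S_{\Gamma_2}$ is already a compatible cofinite entourage on $\Gamma$ separating the two pieces, so no upgrading step is required.
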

\begin{proof}
(1) $\Rightarrow$ (2): If possible, let us assume that $\Gamma$ is the uniform sum of two disjoint subgraphs $\Gamma_1$ and $\Gamma_2$. Let $R_{\Gamma_1}$ be a compatible cofinite entourage over $\Gamma_1$ and $S_{\Gamma_2}$ be another compatible cofinite entourage over $\Gamma_2$. Then $W = R_{\Gamma_1}\cup S_{\Gamma_2}$ is a compatible cofinite entourage over $\Gamma$. 
Moreover $\Gamma/W$ is not path connected, a contradiction.

(2) $\Rightarrow$ (3): If possible, let us assume that $\overline{\Gamma}$ is not cofinitely connected. Hence there exists a compatible cofinite entourage $W$  over $\overline{\Gamma}$ such that $\overline{\Gamma}/W$ is not path connected.  

Let $\Sigma$ be a path connected component of $\overline{\Gamma}/W$. Hence $\Sigma$ is a subgraph of $\overline{\Gamma}/W$ and thus $(\overline{\Gamma}/W)\setminus\Sigma$ is a subgraph of $\overline{\Gamma}/W$ as well. Let $\Gamma_1 = \varphi^{-1}(\Sigma)$ and $\Gamma_2 = \varphi^{-1}(\overline{\Gamma}\setminus \Sigma)$, where $\varphi\from\overline{\Gamma}\to\overline{\Gamma}/W$ is the canonical quotient map. Then $\Gamma_1, \Gamma_2$ are closed subgraphs of $\overline{\Gamma} 
$ such that $\overline{\Gamma}$ is equal to the disjoint union of two closed subgraphs of $\overline{\Gamma}$ and then  $\overline{\Gamma}$ is equal to the uniform sum of two disjoint subgraphs of $\overline{\Gamma}$, a contradiction.

(3) $\Rightarrow$ (1): If possible assume that $\Gamma$ is not cofinitely connected. Then there exists a cofinite entourage $R$ over $\Gamma$ such that $\Gamma/R$ is not path connected. But, $\overline R$ is a compatible cofinite entourage over $\overline{\Gamma}$ such that $\Gamma/R$ is graph isomorphic to $\overline{\Gamma}/\overline R$. Hence $\overline{\Gamma}/\overline R$ is not path connected as well, a contradiction.  
\end{proof}

Many of the properties of connectedness of topological spaces have analogs for cofinite connectedness. Next we list a few of them.

\begin{proposition} \label{p:properties of connectedness}
Let $\Gamma$ be a cofinite graph and let $\Sigma$ be a uniform subgraph. 
\begin{enumerate}
\item If $\Sigma$ is path connected, then it is also cofinitely connected.
\item If $\Sigma$ is cofinitely connected, then so is the cofinite subgraph $\overline\Sigma$.
\item If $\Sigma$ is cofinitely connected and $f\from\Gamma\to\Delta$ a uniformly  continuous map of graphs, then $f(\Sigma)$ is also cofinitely connected $($as a cofinite subgraph of $\Delta)$.
\end{enumerate}
\end{proposition}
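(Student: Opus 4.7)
The plan is to treat each of the three parts separately, with (1) following directly from the fact that graph morphisms preserve paths, (2) being essentially a direct reduction to the equivalence $(1)\Leftrightarrow(3)$ of the preceding proposition, and (3) being proved by pulling entourages back along $f$.

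For (1), I would let $R$ be an arbitrary compatible cofinite equivalence relation on $\Sigma$ and note that the canonical quotient map $\pi\from\Sigma\to\Sigma/R$ is a surjective graph morphism. Since graph morphisms send paths to paths, the image of a path in $\Sigma$ joining two prescribed vertices gives a path in $\Sigma/R$ joining their $R$-classes; hence $\Sigma/R$ is path connected, and $\Sigma$ is cofinitely connected.

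Part (2) amounts to viewing the uniform subgraph $\Sigma$ as a cofinite graph in its own right and applying the implication $(1)\Rightarrow(3)$ of the previous proposition to it; the completion of $\Sigma$, which we identify with the closure $\overline\Sigma\subseteq\overline\Gamma$, is then cofinitely connected. The one point to verify is that the cofinite structure on $\overline\Sigma$ inherited from its embedding in $\overline\Gamma$ coincides with its intrinsic completion structure, which is standard for uniform subspaces.

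For (3), set $g = f|_\Sigma\from\Sigma\to f(\Sigma)$ and fix a compatible cofinite equivalence relation $R$ on the uniform subgraph $f(\Sigma)\subseteq\Delta$. Uniform continuity of $f$ makes $R' = (g\times g)^{-1}(R)$ a compatible cofinite equivalence relation on $\Sigma$, so $\Sigma/R'$ is path connected by hypothesis. The morphism $g$ descends to a well-defined surjective graph morphism $\bar g\from\Sigma/R'\to f(\Sigma)/R$, and the image of any path between two prescribed classes witnesses path connectedness of $f(\Sigma)/R$. The main obstacle I anticipate is the bookkeeping in this last part: verifying that $R'$ is indeed \textbf{cofinite} (that the induced partitions on vertices and edges of $\Sigma$ are finite) and compatible, and ensuring that the induced map lands onto the quotient of the \emph{image} subgraph rather than onto a quotient of $\Delta$; everything else is a short appeal to the machinery already in place.
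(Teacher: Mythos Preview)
Your arguments for parts (1) and (3) are correct and match the paper's proof essentially line for line; the paper likewise pushes paths through the quotient map for (1), and for (3) it pulls back a compatible cofinite entourage $S$ on $f(\Sigma)$ along $f|_\Sigma$, takes a compatible cofinite entourage $R\subseteq (f\times f)^{-1}[S]$ on $\Sigma$, and observes that the induced surjection $\Sigma/R\to f(\Sigma)/S$ carries path connectedness. Your worry about cofiniteness of $R'=(g\times g)^{-1}(R)$ is easily disposed of: the induced map $\Sigma/R'\to f(\Sigma)/R$ is a bijection, so $|\Sigma/R'|=|f(\Sigma)/R|<\infty$.

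For part (2) your route is genuinely different from the paper's. You invoke the equivalence $(1)\Leftrightarrow(3)$ of the preceding proposition, identifying the abstract completion of $\Sigma$ with its closure inside $\overline\Gamma$. The paper instead argues directly via the characterization $(1)\Leftrightarrow(2)$: assuming $\overline\Sigma$ decomposes as a uniform sum $\Sigma_1\amalg\Sigma_2$, it intersects with $\Sigma$ to produce a uniform-sum decomposition of $\Sigma$ itself, contradicting cofinite connectedness of $\Sigma$; a density argument rules out the possibility that one of the pieces $\Sigma_i\cap\Sigma$ is empty. Your approach is cleaner and reuses machinery already in place, while the paper's is self-contained and avoids the identification of completion with closure. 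One small point to watch: in the proposition as stated, $\overline\Sigma$ is the closure in $\Gamma$ (the paper's proof treats it as ``a cofinite subgraph of $\Gamma$''), which need not be complete; but since $\Sigma$ and its closure in $\Gamma$ share the same completion, your argument still goes through after one more application of $(3)\Rightarrow(1)$.
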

\begin{proof} Note that $\Sigma$ is a also a cofinite graph
\begin{enumerate}
\item If $\Sigma$ is path connected then any quotient graph of $\Sigma$ is path connected as well and thus our claim follows.
\item We will first see that $\overline{\Sigma} = \overline{V(\Sigma)\cup E(\Sigma)} = \overline{V(\Sigma})\cup\overline{E(\Sigma)}$ and that equals $V(\overline{\Sigma})\cup E(\overline{\Sigma})$ and thus is a cofinite subgraph of $\Gamma$ as well. Now, if possible suppose $\overline{\Sigma} = \Sigma_1\coprod\Sigma_2$, where $\Sigma_1, \Sigma_2$ are two disjoint nonempty cofinite subgraphs of $\overline{\Sigma}$. Then $\Sigma_1\cap\Sigma, \Sigma_2\cap\Sigma$ are two disjoint connected cofinite subgraphs of $\Sigma$. Let $R_1, R_2$ be two compatible cofinite entourage over $\Sigma_1\cap\Sigma, \Sigma_2\cap\Sigma$ respectively. Then there exist two compatible cofinite entourages $\tilde{R_1}, \tilde{R_2}$ over $\Sigma_1, \Sigma_2$ respectively such that $R_1 \supseteq \tilde{R_1}\cap(\Sigma\times\Sigma)$ and $ R_2$ contains $\tilde{R_2}\cap(\Sigma\times\Sigma)$. But as $\tilde{R_1}\cup \tilde{R_2}$ is a compatible cofinite entourage over $\overline{\Sigma}$, then $(\tilde{R_1}\cup\tilde{R_2})\cap(\Sigma\times\Sigma)$ is equal to $\tilde{R_1}\cap(\Sigma\times\Sigma)\cup \tilde{R_2}\cap(\Sigma\times\Sigma)$ which is a subset of $R_1\cup R_2$. So $R_1\cup R_2$  is a compatible entourage over $\Sigma$. Hence $\Sigma = (\Sigma_1\cap \Sigma)\coprod(\Sigma_2\cap\Sigma)$. Now suppose $\Sigma_1\cap\Sigma = \emptyset$. Then $\Sigma\subseteq \Sigma_2$. However $\Sigma_2$ is closed in $\overline{\Sigma}$ and hence closed in $\Gamma$. Thus $\overline{\Sigma}\subseteq \Sigma_2$ and therefore $\Sigma_1 = \emptyset$, a contradiction. Thus $\overline{\Sigma}$ is cofinitely connected.
\item Let $S$ be a compatible cofinite entourage over $f(\Sigma)$. Then as $f|_{\Sigma}\from\Sigma\to f(\Sigma)$ is uniformly continuous there is a compatible cofinite entourage $R$ over $\Sigma$ such that $R\subseteq (f\times f)^{-1}[S]$. Let us define $g\from\Sigma/R\to f(\Sigma)/S$ via $g(R[a]) = S[f(a)]$, for all $a\in\Sigma$. Now if $R[a] = R[b]$, then $(a,b)\in R$. Hence $(f(a),f(b))$ is in $S$ which implies that $ S[f(a)] = S[f(b)]$. Therefore $g$ is well defined and as $f$ is a map of graphs and both of $\Sigma/R, f(\Sigma)/S$ are discrete, $g$ is a surjective uniformly continuous map of graphs. Since $\Sigma/R$ is path connected then so is $g(\Sigma/R) = f(\Sigma)/S$.
\end{enumerate}
\end{proof}


\section{Cofinite Groups and their Cayley Graphs}

\begin{definition}
Let $G$ be an abstract group and $X = \{*\}\stackrel{\cdot}{\cup}E(X)$ be an abstract graph such that there is a map of sets $\alpha\from X\to G$ with $\alpha(*) = 1_G$, $(\alpha(e))^{-1} = \alpha(\overline e)$, for all $e\in E(X)$. Then the Cayley Graph $\Gamma(G,X)$ is defined as follows:
\begin{enumerate}
\item $V(\Gamma(G,X)) = G\times \{*\}, E(\Gamma(G,X)) = G\times E(X)$. 
\item $s(g,e) = (g,*)$, $t(g,e) = (g\alpha(e),*)$, $\overline{(g,e)} = (g\alpha(e),\overline{e})$.
\end{enumerate}
\end{definition}

Thus it follows that
\begin{enumerate}
\item $\Gamma(G,X) = V(\Gamma(G,X)) \stackrel{\cdot}{\cup}  E(\Gamma(G,X))$.
\item $s, t, \overline{\phantom e}$ are well defined and $t(\overline{(g,e)}) = t(g\alpha(e),\overline{e}) = (g\alpha(e)\alpha(\overline{e}),*)$\\
$= (g\alpha(e)\alpha(e)^{-1}),*) = (g,*) = s(g,e)$; $s(\overline{(g,e)}) = s(g\alpha(e),\overline{e})$\\
$= (g\alpha(e),*) = t(g,e)$.
\item If possible, let $(g,e) = \overline{(g,e)} = ((g\alpha(e),\overline{e})$ and thus $e = \overline e$, a contradiction. Finally, $\overline{\overline{(g,e)}} = \overline{(g\alpha(e),\overline{e})}$\\
$= (g\alpha(e)\alpha(\overline{e}),\overline{\overline e}) = (g\alpha(e)\alpha(e)^{-1}),e) = (g,e)$. Hence $\Gamma(G,X)$ is indeed a graph. 
\end{enumerate}

We say that $\alpha\from X\to G$ generates $G$ algebraically if $\langle\alpha(X)\rangle = G$. Equivalently, $\alpha\from X\to G$ generates $G$ algebraically if the unique extension to $\alpha\from E(X)^*\to G$ is onto.

\begin{lemma}
The Cayley graph $\Gamma(G,X)$ is path connected if and only if $\alpha\from X\to G$ generates $G$ algebraically.
\end{lemma}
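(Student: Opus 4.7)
The plan is to set up a direct correspondence between paths in $\Gamma(G,X)$ issuing from a given vertex and words in the free monoid $E(X)^*$, and then translate this into the equivalence between path-connectedness and algebraic generation.

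First I would describe the shape of an arbitrary path in $\Gamma(G,X)$. By the definition, an edge whose source is $(g,*)$ must have the form $(g,e)$ for some $e\in E(X)$, and its target is $(g\alpha(e),*)$. Iterating this observation, any path of length $n$ starting at $(g_0,*)$ has the form
\[
 (g_0,e_1)\,(g_0\alpha(e_1),e_2)\,\cdots\,(g_0\alpha(e_1)\cdots\alpha(e_{n-1}),e_n),
\]
and ends at the vertex $(g_0\alpha(e_1)\cdots\alpha(e_n),*)$. Conversely, any word $e_1\cdots e_n\in E(X)^*$ determines such a path. Thus paths from $(g_0,*)$ to $(h,*)$ correspond bijectively to words $e_1\cdots e_n\in E(X)^*$ satisfying $\alpha(e_1)\cdots\alpha(e_n)=g_0^{-1}h$, using the extension $\alpha\from E(X)^*\to G$.

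For the ``if'' direction, I assume $\alpha$ generates $G$ algebraically, so that $\alpha\from E(X)^*\to G$ is onto. Given any two vertices $(g,*)$ and $(h,*)$, choose a word $e_1\cdots e_n\in E(X)^*$ with $\alpha(e_1)\cdots\alpha(e_n)=g^{-1}h$; the above correspondence then produces a path in $\Gamma(G,X)$ from $(g,*)$ to $(h,*)$. Conversely, assuming $\Gamma(G,X)$ is path connected, for any $g\in G$ there is a path from $(1_G,*)$ to $(g,*)$, and reading off the edges of this path yields $e_1,\ldots,e_n\in E(X)$ with $\alpha(e_1)\cdots\alpha(e_n)=g$, so $g\in\langle\alpha(X)\rangle$.

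There is no substantive obstacle in this argument: the whole proof is a direct translation between the combinatorics of paths and the multiplication in $G$ via $\alpha$. The only minor care needed is in the base case $g=h$, which corresponds to the empty word in $E(X)^*$ (mapping to $1_G$ under the monoid extension) and hence to a trivial path from a vertex to itself; this is consistent with the convention that every vertex is joined to itself.
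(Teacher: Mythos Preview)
Your argument is correct and is exactly the standard proof of this elementary fact. Note, however, that the paper states this lemma without proof: there is no \texttt{proof} environment between the statement of the lemma and the subsequent definition of the cofinite Cayley graph. So there is no ``paper's own proof'' to compare against; your write-up simply supplies the omitted routine verification, and it does so cleanly.
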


\begin{definition}
Let $G$ be a cofinite group and $X = \{*\}\stackrel{\cdot}{\cup}E(X)$ be a cofinite graph such that there is a uniform continuous map of spaces $\alpha\from X\to G$ with $\alpha(*) = 1_G$, $(\alpha(e))^{-1} = \alpha(\overline e)$, for all $e\in E(X)$. Then the cofinite Cayley Graph $\Gamma(G,X)$ is defined as follows:
\begin{enumerate}
\item $V(\Gamma(G,X)) = G\times \{*\}, E(\Gamma(G,X)) = G\times E(X)$. 
\item $s(g,e) = (g,*)$, $t(g,e) = (g\alpha(e),*)$, $\overline{(g,e)} = (g\alpha(e),\overline{e})$.
\end{enumerate} 
$\Gamma(G,X)$ is endowed with the product uniform topological structure obtained from $G\times X = G\times V(\Gamma(G,X))\stackrel{\cdot}{\cup}G\times E(\Gamma(G,X))$.
\end{definition}
We have already seen that $\Gamma(G,X)$ is an abstract graph. Also being the product of Hausdorff, cofinite spaces, $\Gamma(G,X)$ is a Hausdorff, cofinite space as well. So in order to check that $\Gamma(G,X)$ is a cofinite graph it remains to prove that the compatible cofinite entourages over $\Gamma(G,X)$ forms a fundamental system of entourages. So it suffices to show that the family of cofinite entourages of the form $R\times S$, where $R$ is a cofinite congruence over $G$ and $S$ is a compatible cofinite entourage over $X$ such that $(\alpha\times \alpha)[S]\subseteq R$ forms a fundamental system of entourages. 

To establish the above claim let us first see that the cofinite entourages of the form $R\times S$ are indeed compatible. 
\begin{enumerate}
\item Let $((x,y),(p,q))\in R\times S$. So $(x,p)\in R\subseteq G\times G$ and $(y,q)$ is in $S$. Thus either $(y,q)\in S_V$ or $(y,q)\in S_E$ which implies that $y = * =q$ or $(y,q)\in S_E$. Hence $(x,y), (p,q)\in V(\Gamma(G,X))$ or $(x,y), (p,q)\in E(\Gamma(G,X))$. Hence $R\times S\subseteq (R\times S)_V\stackrel{\cdot}{\cup}(R\times S)_E$. The other direction of the inclusion follows more immediately.
\item Let $((g_1,e_1),(g_2,e_2))\in R\times S$. Then $(g_1,g_2)\in R$ and $(e_1,e_2)$ is in $S$. This implies that $(\alpha\times \alpha)(e_1,e_2) = (\alpha(e_1),\alpha(e_2))\in R$ and $(\overline{e_1},\overline{e_2})\in S$. Hence $(g_1\alpha(e_1)$, $g_2\alpha(e_2))\in R$, which implies $((g_1,*), (g_2,*))$ and $((g_1\alpha(e_1),*), (g_2\alpha(e_2),*))$ as well as $((g_1\alpha(e_1),\overline{e_1}), (g_2\alpha(e_2)\overline{e_2}))$ is in $R\times S$. Hence $(s(g_1,e_1), s(g_2,e_2))$, $(t(g_1,e_1), t(g_2,e_2))$ and $(\overline{(g_1,e_1)}$, $\overline{(g_2,e_2)})\in R\times S$.
\item If possible let $(\overline{(g_1,e_1)},(g_1,e_1))\in R\times S$ so $((g_1\alpha(e_1),\overline{e_1}),(g_1,e_1))$ is in $R\times S$. Thus $(\overline{e_1},e_1)\in S$, a contradiction.
\end{enumerate}
Now let $R\times T$ be any cofinite entourage over $G\times X$. Note that since $\alpha$ is uniformly continuous and $R$ is a cofinite congruence  over $G$, $T$ is a cofinite entourage over $X$, $(\alpha\times \alpha)^{-1}[R]\cap T$ is a cofinite entourage over $X$ and $(\alpha\times \alpha)[(\alpha\times \alpha)^{-1}[R]\cap T]\subseteq R$. So in particular one can take $S$ to be a compatible cofinite entourage over $X$ such that $S\subseteq (\alpha\times \alpha)^{-1}[R]\cap T$.  Then $(\alpha\times \alpha)[S]\subseteq R$ and $R\times S\subseteq R\times T$. This proves that $\Gamma(G,X)$ is a cofinite graph. We say that $\alpha\from X\to G$ generates $G$ topologically if $\overline{\langle\alpha(X)\rangle} = G$. 

\begin{theorem}
Let $\Gamma=\Gamma(G,X)$ be the cofinite Cayley graph. $\alpha$ from $X$ to $G$ generates $G$ topologically iff $\Gamma$ is cofinitely connected.
\end{theorem}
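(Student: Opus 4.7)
The strategy is to reduce the theorem to the discrete Cayley-graph lemma already established, by exploiting the fundamental system of entourages on $\Gamma$ identified in the construction preceding the theorem. Recall that the entourages of the form $R\times S$, where $R$ is a cofinite congruence on $G$ and $S$ is a compatible cofinite entourage on $X$ satisfying $(\alpha\times\alpha)[S]\subseteq R$, form a fundamental system. The first step is to identify the quotient $\Gamma/(R\times S)$, as an abstract graph, with the discrete Cayley graph $\Gamma(G/R, X/S)$ associated with the induced map $\bar\alpha\from X/S\to G/R$; this map is well defined precisely because $(\alpha\times\alpha)[S]\subseteq R$, and it inherits the identity and involution relations from $\alpha$. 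Since cofinite connectedness only needs to be tested against a fundamental system (path connectedness being preserved by surjective graph maps), $\Gamma$ is cofinitely connected if and only if every such $\Gamma(G/R, X/S)$ is path connected.

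For the forward direction, assume $\alpha$ generates $G$ topologically while $\Gamma$ is not cofinitely connected. By the reduction above there is a pair $R,S$ as above for which $\Gamma(G/R, X/S)$ fails to be path connected, so by the earlier lemma the image $\bar\alpha(X/S)$ generates a proper subgroup $H$ of $G/R$. Let $\pi\from G\to G/R$ denote the canonical projection. Then $\pi^{-1}(H)$ is a union of $R$-classes, hence clopen, is a proper subgroup of $G$, and contains $\alpha(X)$. Therefore $\overline{\langle\alpha(X)\rangle}\subseteq\pi^{-1}(H)\subsetneq G$, contradicting topological generation.

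For the reverse direction, assume $\Gamma$ is cofinitely connected while $\overline{\langle\alpha(X)\rangle}\neq G$. Pick $g\in G\setminus\overline{\langle\alpha(X)\rangle}$; since the complement is open in the Hausdorff cofinite group $G$, there is a cofinite congruence $R$ on $G$ whose diagonal class $N_R$ is an open normal subgroup of finite index satisfying $gN_R\cap\overline{\langle\alpha(X)\rangle}=\emptyset$. Consequently $\pi(g)\notin\langle\pi\alpha(X)\rangle$, so this image fails to generate $G/R$ algebraically. Using uniform continuity of $\alpha$, select a compatible cofinite $S$ on $X$ with $(\alpha\times\alpha)[S]\subseteq R$, exactly as in the construction of the fundamental system. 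By the earlier lemma, $\Gamma(G/R, X/S)$ is not path connected, and its identification with $\Gamma/(R\times S)$ yields a compatible cofinite entourage on $\Gamma$ violating cofinite connectedness, a contradiction.

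The principal obstacle is the graph-theoretic identification $\Gamma/(R\times S)\cong\Gamma(G/R, X/S)$: one must check that the source, target, and involution on $\Gamma$ descend consistently to the quotient and that no vertex class coalesces with an edge class, so that the quotient really is the Cayley graph of the quotient data. A secondary technical point is the observation that every cofinite congruence on a Hausdorff cofinite group corresponds to an open normal subgroup of finite index, which justifies both the clopenness of $\pi^{-1}(H)$ in the forward direction and the existence of the congruence $R$ in the reverse direction.
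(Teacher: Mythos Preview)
Your proof is correct and follows essentially the same approach as the paper: both hinge on the identification $\Gamma/(R\times S)\cong\Gamma(G/R,X/S)$ together with the discrete Cayley-graph lemma, reducing cofinite connectedness of $\Gamma$ to algebraic generation of each $G/R$ by the image of $X/S$. The only difference is stylistic---you argue each direction by contradiction (packaging the obstruction as the proper clopen subgroup $\pi^{-1}(H)$), whereas the paper proceeds directly by showing every coset $R[g]$ meets $\langle\alpha(X)\rangle$; the substance is the same.
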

\begin{proof}
Let us first assume that $\alpha\from X\to G$ topologically generates $G$ and let $T$ be a compatible cofinite entourage over $\Gamma$, say $T$ is equal to $R\times S$ where $R$ is a cofinite congruence over $G$ and $S$ is a compatible cofinite entourage over $X$ where $S\subseteq (\alpha\times\alpha)^{-1}[R]$. Let us define $\alpha_{RS}\from X/S\to G/R$ via $\alpha_{RS}(S[x]) = R[\alpha(x)]$. Clearly, $\alpha_{RS}$ is well defined and $\alpha_{RS}(S[*]) = R[1_G]$ and $\alpha_{RS}(\overline{S[e]}) = R[\alpha(\overline e)] = R[(\alpha(e))^{-1}]$\\
$= (R[\alpha(e)])^{-1} = (\alpha_{RS}(S[e]))^{-1}$, for all $S[e]\in E(X/S)$. Let us now see that $\Gamma/T\cong\Gamma(G/R,X/S)$. Define $\theta\from\Gamma/T\to\Gamma(G/R,X/S)$ via $\theta(T[(g,x)]) = (R[g],S[x])$ for all $x$ in $X$ and all $g$ in $G$. Clearly, it is well defined injection as $T[(h,y)] = T[(g,x)]$ if and only if $((h,y),(g,x))\in T$ if and only if $(h,g)\in R, (y,x)\in S$ if and only if $R[h] = R[g]$ and $S[x] = S[y]$ if and only if $(R[h],S[y]) = (R[g],S[x])$. Also for all $(R[g],S[x])\in\Gamma(G/R,X/S)$, there exists $T[(g,x)]\in \Gamma/T$ such that $\theta(T[(g,x)]) = (R[g],S[x])$. Moreover it can easily be seen that $\theta$ is a map of graphs as $\theta(T[(g,*)])$ which is equal to $(R[g],S[*])$ belongs to $V(\Gamma(G/R,X/S))$ and $\theta(T[(g,e)])$ which equals to $(R[g],S[e])$ belongs to $E(\Gamma(G/R,X/S))$. Further more for all $(T[(g,e)])$ in $E(\Gamma/T)$ we see that $\theta(s(T[(g,e)])) = \theta(T[s(g,e)])$ which also equals to $\theta(T[(g,*)])$ equal to $(R[g],S[*])$ and that equals to $s(R[g],S[e])$. We also notice that $\theta(t(T[(g,e)])) = \theta(T[t(g,e)]) = \theta(T[(g\alpha(e),*)]) = (R[g\alpha(e)],S[*])$ which we know is equal to $(R[g]R[\alpha(e)],S[*]) = (R[g]\alpha_{RS}(S[e]),S[*])$ and that is equal to $t(R[g],S[e])$ Finally, $\theta(\overline{(T[(g,e)]}) = \theta(T[\overline{(g,e)}])$ and that equals $\theta(T[(g\alpha(e),\overline e)]) = (R[g\alpha(e)],S[\overline e])$ which can be written as $(R[g]R[\alpha(e)],S[\overline e]) = (R[g]\alpha_{RS}(S[e]),S[\overline e]) = (R[g]\alpha_{RS}(S[e]), \overline{S[e]})$ and that equals $\overline{(R[g],S[e])}$. Since $\Gamma/T, \Gamma(G/R, X/S)$ are discrete cofinite graphs, our claim follows. 

Now we wish to prove that $\langle\alpha_{RS}(X/S)\rangle = G/R$. Let $R[g]\in G/R$. Then as $\overline{\langle\alpha(X\rangle)} = G$, we have $R[g]\cap\langle\alpha(X)\rangle\neq\emptyset$. Let $a\in R[g]\cap \langle\alpha(X)\rangle$. So, $ R[g] = R[a]$. Also, since $a\in \langle\alpha(X)\rangle$, $a = \alpha(e_1)\alpha(e_2)\cdots \alpha(e_n)$, for some $e_1, e_2, \cdots, e_n\in E(X)$. Hence $R[a] = R[\alpha(e_1)]R[\alpha(e_2)]\cdots R[\alpha(e_n)]$, and one can represent this as $\alpha_{RS}(S[e_1])\alpha_{RS}(S[e_1])\cdots \alpha_{RS}(S[e_1])$. Thus $R[g] = R[a]\in\langle\alpha_{RS}(X/S)\rangle$. Therefore $\langle\alpha_{RS}(X/S)\rangle = G/R$ and consequently, $\Gamma/T = \Gamma(G/R,X/S)$ is path connected. Hence $\Gamma$ is cofinitely connected.  

Conversely, let us now take $\Gamma$ to be cofinitely connected. We want to show that $\overline{\langle\alpha(X)\rangle} = G$. So we intend to show that for any $g$ in $G$ and any open set $R[g]$ on $G, R[g]\cap\langle\alpha(X)\rangle\ne\emptyset$. We can form a compatible cofinite entourage $T = R\times S$ where $S$ is a compatible cofinite entourage over $X$ and $S\subseteq (\alpha\times\alpha)^{-1}[R]$. As earlier we can form the Cayley graph $\Gamma/T = \Gamma(G/R,X/S)$ and as $\Gamma$ is cofinitely connected, $\Gamma/T$ and therefore $\Gamma(G/R, X/S)$, is path connected. This implies $\langle\alpha_{RS}(X/S)\rangle = G/R$. So there is $e_1, e_2,\cdots,e_n$ in $E(X)$ such that $\alpha_{RS}(S[e_1])\alpha_{RS}(S[e_2])\cdots\alpha_{RS}(S[e_n]) = R[g]$. Thus we can finally say that $\alpha(e_1)\alpha(e_2)\cdots\alpha(e_n)\in R[g]$ which means $\langle\alpha(X)\rangle\cap R[g]\ne\emptyset$ and thus $\overline{\langle\alpha(X)\rangle} = G$. Hence $\alpha\from X\to G$ topologically generates $G$.
\end{proof}

\section{Groups Acting on Cofinite Graphs}\label{Group Action}

Let $G$ be a group and $\Gamma$ be a cofinite graph. We say that the group $G$ acts over $\Gamma$ if and only if 
\begin{enumerate}
\item For all $x$ in $\Gamma$, for all $g$ in $G, g.x$ is in $\Gamma$

\item For all $x$ in $\Gamma$, for all $g_1, g_2$ in $G,  g_1.( g_2.x)=( g_1 g_2).x$

\item For all $x$ in $\Gamma$, $1.x=x$

\item For all $v$ in $V(\Gamma)$, for all $g$ in $G, g.v$ is in $V(\Gamma)$ and for all $e$ in $E(\Gamma)$, for all $g$ in $G, g.e$ is in $E(\Gamma)$.

\item For all $e$ in $E(\Gamma)$, for all $g$ in $G, g.s(e)=s(g.e), g.t(e)=t(ge), g.(\overline{e})=\overline{g.e}$ 

\item There exists a $G-$invariant orientation $E^+(\Gamma)$ of $\Gamma$.
\end{enumerate}

Note that the aforesaid group action restricted to a singleton group element $g\in G$ can be treated as a well defined map of graphs, $\Gamma\to \Gamma$ taking $x\mapsto g.x$.  

\begin{definition}
A group $G$ is said to act uniformly equicontinuously over a cofinite graph $\Gamma$, if and only if for each entourage $W$ over $\Gamma$ there exists an entourage $V$ over $\Gamma$ such that for all $g$ in $G, (g\times g)[V]$ is a subset of $W$.
\end{definition}
\begin{lemma}
If $G$ acts uniformly equicontinuously over a cofinite graph $\Gamma$, then there exists a fundamental system of entourages consisting of $G$-invariant compatible cofinite entourages over $\Gamma$, i.e. for any entourage $U$ over $\Gamma$ there exists a compatible cofinite entourage $R$ over $\Gamma$ such that for all $g\in G, (g\times g)[R]\subseteq R\subseteq U$. 
\end{lemma}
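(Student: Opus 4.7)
The plan is to build the $G$-invariant entourage $R$ from the given data by intersecting the $G$-translates of a chosen compatible cofinite entourage sitting inside $U$, and then check the resulting object has all the required properties.

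First, given an entourage $U$ over $\Gamma$, I would use the fact that $\Gamma$ is a cofinite graph to refine $U$ to a compatible cofinite entourage $W\subseteq U$. Applying the uniform equicontinuity hypothesis to $W$ yields an entourage $V'$ over $\Gamma$ with $(g\times g)[V']\subseteq W$ for every $g\in G$, and by a further refinement I may assume $V'$ is itself a compatible cofinite entourage, call it $V$. The inclusion $(g\times g)[V]\subseteq W$ is equivalent to $V\subseteq (g\times g)^{-1}[W]$ for all $g\in G$.

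The key construction is
\[
R \;=\; \bigcap_{g\in G}(g\times g)^{-1}[W].
\]
I would then verify the required properties in turn. Since each $g\in G$ acts as a bijection on $\Gamma$, each $(g\times g)^{-1}[W]$ is an equivalence relation (preimages of equivalence relations under bijections are equivalence relations), so $R$ is an equivalence relation. Since each $g$ acts as a map of graphs and $W$ is compatible, each $(g\times g)^{-1}[W]$ is compatible with the graph structure, and compatibility is preserved under arbitrary intersections, so $R$ is compatible. Taking $g=1_G$ shows $R\subseteq W\subseteq U$, and the inclusion $V\subseteq (g\times g)^{-1}[W]$ holding for every $g$ gives $V\subseteq R$. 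Because $V$ is cofinite, its partition of $\Gamma$ is finite; since $R\supseteq V$ corresponds to a coarser partition, $R$ is cofinite as well. Finally, $G$-invariance follows by reindexing: for any $h\in G$,
\[
(h\times h)^{-1}[R] \;=\; \bigcap_{g\in G}(gh\times gh)^{-1}[W] \;=\; \bigcap_{g'\in G}(g'\times g')^{-1}[W] \;=\; R,
\]
so in particular $(h\times h)[R]\subseteq R$.

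The only genuinely delicate step is confirming cofiniteness of $R$: a priori the intersection ranges over the (possibly infinite) group $G$, so the number of $R$-classes could blow up. The point that removes this worry is that $V$, which \emph{is} cofinite, lies inside every member of the intersection and therefore inside $R$, forcing the $R$-partition to be coarser than the finite $V$-partition. With that observation in hand, each of the bulleted checks above is routine, and the construction delivers a fundamental system of $G$-invariant compatible cofinite entourages as required.
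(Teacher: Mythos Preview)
Your argument is correct and takes a genuinely different route from the paper's. The paper builds the $G$-invariant entourage ``from below'': starting from a compatible cofinite $S$ with $(g\times g)[S]\subseteq U$ for all $g$, it forms the union $S_0=\bigcup_{g\in G}(g\times g)[S]$ and then the transitive closure $T=\langle S_0\rangle$. This forces a rather delicate verification that $T$ is still compatible, in particular that $(e,\overline{e})\notin T$; the paper handles this by invoking the $G$-invariant orientation $E^+(\Gamma)$ and passing to orientation-preserving entourages. Your construction instead works ``from above'': you shrink the compatible cofinite $W$ by intersecting its $G$-translates, $R=\bigcap_{g\in G}(g\times g)^{-1}[W]$. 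This is cleaner because intersections of equivalence relations are automatically equivalence relations (no transitive closure needed), preimages of compatible entourages under graph maps are compatible, and compatibility is stable under intersection, so the condition $(e,\overline{e})\notin R$ falls out immediately from $R\subseteq W$. The only price is that you need the auxiliary cofinite $V\subseteq R$ to guarantee cofiniteness of the infinite intersection, which you correctly isolate as the one nontrivial point. In short, your approach avoids the orientation machinery entirely and yields a shorter proof; the paper's approach has the minor advantage that its output $T$ is described explicitly as generated by the $G$-orbit of $S$, which can occasionally be useful when one wants to track how $T$ depends on $S$.
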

\begin{proof}
 Let $U$ be any cofinite entourage over $\Gamma$. Then as $G$ acts uniformly equi continuously over $\Gamma$, there exists a compatible cofinite entourage $S$ over $\Gamma$ such that forall $g\in G,  (g\times g)[S]\subseteq U$. Choose a $G$-invariant orientation $E^+(\Gamma)$ of $\Gamma$. Without loss of generality, we can assume that our compatible equivalence relation $S$ on $\Gamma$ is {\it orientation preserving\/} i.e. whenever $(e,e')\in R$ and $e\in E^+(\Gamma)$, then also $e'\in E^+(\Gamma)$.  Clearly, $S\subseteq\cup_{g\in G}(g\times g)[S]\subseteq U$. Now if $S_0=\cup_{g\in G}(g\times g)[S]$ and $T=\langle S_0\rangle$, note that  $S\subseteq T\subseteq U$. Since for all $h\in G, (h\times h)[S_0]=S_0$ and $S_0^{-1} = S_0$ it follows that $T$ is in the transitive closure of $S_0$. Let $(x,y)\in  T$. Then there exists a finite sequence $x_0,x_1,..,x_n$ such that $(x_i,x_{i+1})\in S_0$, for all $i=0,1,2,...,n-1$ and $x = x_0, y = x_n$. Hence $(gx_i,gx_{i+1})\in S_0$, for all $i=0,1,2,...,n-1$, for all $g\in G$. Thus $(gx_0,gx_n)=(gx,gy)\in T$, for all $g\in G$. Hence for all $g\in G, (g\times g)[T]\subseteq T$ and our claim that $T$ is a $G$-invariant cofinite entourage, follows. It remains to check that $T$ is compatible. Let $(x,y)\in T$. If $(x,y)\in S_0$, then there is $(t,s)\in S = S_V\cup S_E$ and $g\in G$ such that $(gt,gs) = (x,y)$. Without loss of generality let $(t,s)\in S_V$. Then $(t,s)\in V(\Gamma)\times V(\Gamma)$ which implies that $ (x,y)\in T_V$. Now let $(x,y)\in  T\setminus S_0$. Then there exists a finite sequence $x_0,x_1,..,x_n$ such that $(x_i,x_{i+1})\in S_0$, for all $i=0,1,2,...,n-1$ and $x = x_0, y = x_n$. Hence by the previous argument if $(x_0,x_1)\in T_V$ then $(x_i,x_{i+1})\in T_V$, for all $i=1,2,...,n-1$. Thus $(x,y)\in T_V$. If $(x_0,x_1)\in T_E$ then $(x_i,x_{i+1})\in T_E$, for all $i=1,2,...,n-1$, which implies $ (x,y)\in T_E$. Let $(e_1,e_2)\in T$. If $(x,y)\in S_0$, then there is $(p,q)\in S$ and $g\in G$ such that $(gp,gq) = (e_1,e_2)$. Then $(s(p),s(q))\in S$. So $(s(e_1),s(e_2))$ which equals to $(gs(p),gs(q))$ is in $(g\times g)[S]\subseteq S_0$ so that $ (s(e_1),s(e_2))\in T$. Now let $(e_1,e_2)\in  T\setminus S_0$. Then there exists a finite sequence $x_0,x_1,..,x_n$ such that $(x_i,x_{i+1})\in S_0, \forall i=0,1,2,...,n-1$ and $e_1 = x_0, e_2 = x_n$. Hence by the previous argument $(s(x_i),s(x_{i+1}))\in T, \forall i=0, 1,2,...,n-1$ and thus   $(s(e_1),s(e_2))\in T$. Similarly, $(t(e_1),t(e_2))\in T$ and $(\overline{e_1},\overline{e_2})\in T$.  Finally, to show that for any $e\in E^+(\Gamma), (e\overline e)\in T$ it suffices to note that $T$ is orientation preserving. Alternatively, if possible let $(e,\overline e)\in T$. If $(e,\overline e)\in S_0$, then there is $(p,q)\in S$ and $g\in G$ such that $(gp,gq) = (e,\overline e)$. Then $\overline e = \overline{gp} = g\overline p = gq$ which implies that $ \overline p = q$, so $ (p,\overline p)\in S$, a contradiction. Now let $(e,\overline e)\in  T\setminus S_0$. Then there exists a finite sequence $x_0,x_1,..,x_n$ such that $(x_i,x_{i+1})\in S_0$, for all $i=0,1,2,...,n-1$ and $e = x_0, \overline e = x_n$.  Now let there is $(p,q)\in S$ and $g\in G$ such that $(gp,gq) = (x_0,x_1)$. Without loss of generality we may assume $(p,q)\in E^+(\Gamma)\times E^+(\Gamma)$. Then $(gp,gq) = (x_0,x_1)\in E^+(\Gamma)\times E^+(\Gamma)$.  Hence $(x_i,x_{i+1})\in E^+(\Gamma)\times E^+(\Gamma)$, for all $i=1,2,...,n-1$ which implies that $ (e,\overline e)\in E^+(\Gamma)\times E^+(\Gamma)$, a contradiction. Our claim follows. 
\end{proof}
\begin{definition}
We say a group $G$ acts on a cofinite space $\Gamma$ faithfully, if for all $g$ in $G\setminus\{1\}$ there exists $x$ in $\Gamma$ such that $gx$ is not equal to $x$ in $\Gamma$.
\end{definition}
\begin{lemma}\label{equi continuous}
Let $G$ acts on a cofinite graph $\Gamma$ uniformly equicontinuously. Then $G$ acts on $\Gamma/R$ and  $G/N_R$ acts on $\Gamma/R$ as well, where $R$ is a $G$-invariant compatible cofinite entourage over $\Gamma$. If $\{R\mid R\in I\}$ is a fundamental system of $G$-invariant compatible cofinite entourages over $\Gamma$, then $\{N_R\mid R\in I\}$ forms a fundamental system of cofinite congruences for some uniformity over $G$. 
\end{lemma}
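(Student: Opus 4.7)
The plan is to verify the three assertions in order. First, to see that $G$ acts on $\Gamma/R$ whenever $R$ is a $G$-invariant compatible cofinite entourage, define $g\cdot R[x]:=R[g\cdot x]$. Well-definedness follows directly from the $G$-invariance hypothesis $(g\times g)[R]\subseteq R$: if $R[x]=R[y]$, then $(x,y)\in R$, whence $(g\cdot x,g\cdot y)\in R$, and so $R[g\cdot x]=R[g\cdot y]$. The group action axioms, together with compatibility with $s$, $t$, and $\overline{\phantom{e}}$, descend mechanically from the corresponding axioms on $\Gamma$, so each $g$ operates on the discrete finite graph $\Gamma/R$ as a (uniformly continuous) graph map.

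Next, let $N_R$ denote the kernel of the induced homomorphism $G\to\operatorname{Aut}(\Gamma/R)$, equivalently the set of $g\in G$ fixing every $R$-class setwise in the required way. Because $\Gamma/R$ is a finite graph, $\operatorname{Aut}(\Gamma/R)$ is finite and hence $N_R$ is a normal subgroup of $G$ of finite index. Passing to the quotient, $G/N_R$ inherits a well-defined (and automatically faithful) action on $\Gamma/R$.

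For the concluding claim, associate to each $N_R$ the cofinite congruence
\[
\widetilde{N_R}:=\{(g_1,g_2)\in G\times G\mid g_1^{-1}g_2\in N_R\};
\]
by the preceding paragraph, each $\widetilde{N_R}$ is a cofinite equivalence relation on $G$ compatible with multiplication and inversion. What remains is to verify that $\{\widetilde{N_R}\mid R\in I\}$ is a filter base, so that it generates a uniformity on $G$ turning it into a cofinite group. Given $R_1,R_2\in I$, use the assumption that $I$ is a fundamental system of $G$-invariant compatible cofinite entourages over $\Gamma$ to pick $R_3\in I$ with $R_3\subseteq R_1\cap R_2$. For $i=1,2$, the canonical quotient $\Gamma/R_3\to\Gamma/R_i$ is $G$-equivariant, so any element of $G$ fixing every $R_3$-class fixes every $R_i$-class; that is, $N_{R_3}\subseteq N_{R_1}\cap N_{R_2}$, and consequently $\widetilde{N_{R_3}}\subseteq\widetilde{N_{R_1}}\cap\widetilde{N_{R_2}}$. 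This yields the filter-base property, so $\{\widetilde{N_R}\mid R\in I\}$ is indeed a fundamental system of cofinite congruences for a uniformity on $G$.

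The main obstacle is not any single calculation but the packaging of the finite-index observation: one must recognize that because $\Gamma/R$ is finite the map $G\to\operatorname{Aut}(\Gamma/R)$ automatically has finite image, so $N_R$ is normal of finite index and $\widetilde{N_R}$ is a genuine cofinite congruence. Once that is in hand, the filter-base condition transfers from $I$ to $\{\widetilde{N_R}\}$ with no further subtlety, the equivariance of the quotient maps $\Gamma/R_3\to\Gamma/R_i$ being the only structural input required.
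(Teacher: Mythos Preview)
Your argument is correct and tracks the paper's proof in substance, with a cleaner packaging. The paper treats $N_R$ from the outset as a relation on $G$ (namely $\{(g,h):\;(g\cdot x,h\cdot x)\in R\ \text{for all}\ x\}$) and then checks by hand that it is reflexive, symmetric, transitive, and compatible with multiplication, obtaining finiteness of $G/N_R$ via an explicit injection into $\operatorname{Sym}(\Gamma/R)$. You instead take $N_R$ to be the kernel of the homomorphism $G\to\operatorname{Aut}(\Gamma/R)$, so normality and the congruence properties of $\widetilde{N_R}$ come for free, and the finite-index conclusion is immediate from finiteness of $\operatorname{Aut}(\Gamma/R)$; this is exactly the paper's $N_R[1]$, with the equivalence $G/N_R[1]\cong G/N_R$ established in the very next lemma. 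The filter-base verification is likewise the same idea: the paper records $N_R\cap N_S=N_{R\cap S}$, while you pass to an $R_3\in I$ below $R_1\cap R_2$; either way, the directedness of $I$ transfers to $\{N_R\}$. One small point you glossed with ``descend mechanically'': the paper's notion of a group action on a graph includes the existence of a $G$-invariant orientation, and this does descend to $\Gamma/R$ only because (as used elsewhere in the paper) the entourages in the fundamental system may be taken orientation-preserving; it would be worth saying so explicitly.
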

\begin{proof}
Let $R$ be a $G$-invariant compatible cofinite entourage over $\Gamma$. Let us define $G\times\Gamma/R\to\Gamma/R$ via $g.R[x]=R[g.x]$, for all $g\in G$, for all $x\in \Gamma$. Now let $R[x]=R[y]$ so $(x,y)\in R$ which implies that $(g.x,g.y)\in R$. Then $R[g.x]=R[g.y]$. Hence the induced group action is well defined. 

Let us now consider the group action $G/N_R\times \Gamma/R\to \Gamma/R,$ defined via $N_R[g].R[x]=R[g.x]$,for all $x\in \Gamma$, for all $g\in G$. Now let $(N_R[g],R[x])=(N_R[h],R[y])$ which implies that $ (g,h)\in N_R, (x,y)$ is in $R$. Then $(g.x,h.x)\in R$, as $h^{-1}\in G, (h^{-1}g.x,h^{-1}h.x)\in R$. So $(h^{-1}g.x,y)\in R$. Thus $(g.x,h.y)\in R$ which implies that $R[g.x]$ equals to $R[h.y]$. Hence the induced group action is well defined. Let us now show that $N_R$ is an equivalence relation over $G$, for all $G$-invariant compatible cofinite entourage $R$ over $\Gamma$. 

\begin{enumerate}
\item for all $g\in G$, for all $x\in \Gamma, (g.x,g.x)\in R$. Hence $(g,g)\in N_{R}$, for all $g\in G$ which implies that $D(G)\subseteq N_R$.
\item Now $(h,g)\in N_R^{-1}\Leftrightarrow (g,h)\in N_R\Leftrightarrow (g.x,h.x)\in R$, for all $x\in \Gamma$. Thus $(g.x,h.x)\in R\Leftrightarrow (h.x,g.x)\in R$, for all $x\in \Gamma$. Hence $(h.x,g.x)\in R\Leftrightarrow (h,g)\in N_R$. Thus $N_R^{-1}=N_R$.
\item Let $(g,h), (h,k)\in N_R$. This implies $ (g.x,h.x),(h.x,k.x)$ is in $R, \forall x\in \Gamma$. Hence $(g.x,k.x)\in R$, for all $x\in \Gamma$. So $(g,k)\in N_R$ which implies that $(N_R)^2\subseteq N_R$.
\end{enumerate}
Also we now check that $N_R$ is a congruence over $G$. For, let us take $(g_1,g_2),(g_3,g_4)\in N_R$. Then for all $x\in \Gamma, (g_1.x,g_2.x),(g_3.x,g_4.x)\in R$; for all $x\in \Gamma, g_3.x\in \Gamma$ and so $(g_1g_3.x,g_2g_3.x)\in R$ and $(g_2g_3.x,g_2g_4.x)$ is in $R$, since $R$ is $G$-invariant. Thus $(g_1g_3.x,g_2g_4.x)\in R$, for all $x\in \Gamma$ so that $(g_1g_3,g_2g_4)\in N_R$. Thus our claim follows. Let us now show that $G/N_R$ is finite. Furthermore, define  $g\from\Gamma/R\to\Gamma/R$ as $g$ maps $(R[x])$ into $R[g.x]$. Now, $R[x]=R[y]\Longleftrightarrow (x,y)\in R$ if and only if $(g.x,g.y)\in R\Longleftrightarrow R[g.x]=R[g.y]$. Hence the map $g$ is a well defined injection. Now for all $R[x]\in\Gamma/R$ there exists $g^{-1}R[x]\in\Gamma/R$ such that $g(g^{-1}R[x])$ equals to $R[x]$. Hence $g\in Sym(\Gamma/R)$. Now let us define a map $\theta\from G/N_R\to Sym(\Gamma/R)$ via $\theta(N_R[g])=g.$ Now $N_R[g_1]$ equals to $N_R[g_2]$ if and only if $(g_1,g_2)\in N_R$ if and only if $(g_1.x,g_2.x)\in R$ for all $x\in\Gamma$. Hence $(g_1.x,g_2.x)\in R$ if and only if $R[g_1.x]=R[g_2.x]$ if and only if $g_1(R[x])=g_2(R[x])$ $g_1=g_2$ in $Sym(\Gamma/R)$. Hence $\theta$ is a well defined injection. Thus $\left|G/N_R\right|\leq\left|Sym(\Gamma/R)\right|<\infty$ as $\left|\Gamma/R\right|<\infty$. So, next we will like to show that $\{N_R\mid R\in I\}$ forms a fundamental system of cofinite congruences over $G$. 
\begin{enumerate}
\item $D(G)\subseteq N_R$, for all $R\in I$, as $N_R$ is reflexive.
\item Now for some $R,S\in I, (g_1, g_2)\in N_R\bigcap N_S$ if and only if $(g_1.x, g_2.x)\in R\bigcap S$, for all $x\in \Gamma\Leftrightarrow (g_1, g_2)\in N_{R\bigcap S}$. Thus $N_R\bigcap N_S=N_{R\bigcap S}$.
\item For all $N_R, N_R^2= N_R$, as $N_R$ is transitive.
\item For all $N_R, N_R^{-1}=N_R$, as $N_R$ is symmetric. 
\end{enumerate}
Hence our claim follows.
\end{proof}
\begin{definition}
We say that a group $G$ acts on a cofinite graph $\Gamma$ residually freely, if there exists a fundamental system of $G$-invariant compatible cofinite entourages $R$ over $\Gamma$ such that the induced group action of $G/N_R$ over $\Gamma/R$ is a free action.
\end{definition}
\begin{lemma}
$N_R[1]$ is a finite index normal subgroup of $G$ and $G/N_R[1]$ is isomorphic with $G/N_R$. More generally, if $N$ is a congruence on $G$, then $N[1]$ is a normal subgroup of $G$ and $G/N[1]\cong G/N$. 
\end{lemma}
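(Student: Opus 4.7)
The plan is to handle the general statement first and deduce the finite-index conclusion for $N_R[1]$ as an immediate corollary, using the fact (established in the previous lemma) that $G/N_R$ is finite. Throughout, I will exploit nothing more than the congruence property of $N$: if $(a_1,a_2),(b_1,b_2)\in N$ then $(a_1b_1,a_2b_2)\in N$, together with reflexivity, symmetry and transitivity.

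First I will verify that $N[1]=\{g\in G\mid (g,1)\in N\}$ is a normal subgroup. Since $(1,1)\in N$ we have $1\in N[1]$. If $g,h\in N[1]$ then $(g,1),(h,1)\in N$, so $(gh,1)\in N$ by congruence, giving closure under products. For inverses, from $(g,1)\in N$ and $(g^{-1},g^{-1})\in N$ the congruence property yields $(1,g^{-1})\in N$, and symmetry gives $g^{-1}\in N[1]$. For normality, if $g\in N[1]$ and $h\in G$, combining $(h,h)$, $(g,1)$ and $(h^{-1},h^{-1})$ via the congruence property produces $(hgh^{-1},1)\in N$.

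Next I will identify the $N$-class of each element as a coset of $N[1]$, i.e.\ $N[g]=gN[1]$. If $h\in N[g]$, then $(h,g)\in N$; multiplying on the left by $(g^{-1},g^{-1})\in N$ gives $(g^{-1}h,1)\in N$, so $g^{-1}h\in N[1]$ and $h\in gN[1]$. Conversely, if $h=gk$ with $k\in N[1]$, then $(g,g),(k,1)\in N$ yields $(gk,g)\in N$, i.e.\ $h\in N[g]$. With this identification in hand, define
\[
\theta\from G/N[1]\to G/N,\qquad \theta(gN[1])=N[g].
\]
The identification $N[g]=gN[1]$ shows at once that $\theta$ is a well-defined bijection, and the congruence property shows that the operation $N[g]\cdot N[h]=N[gh]$ on $G/N$ is well-defined and matches the coset operation $gN[1]\cdot hN[1]=ghN[1]$. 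Hence $\theta$ is a group isomorphism.

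For the specific case of $N_R$, the previous lemma established that $N_R$ is a cofinite congruence on $G$, so the general statement applies and gives $G/N_R[1]\cong G/N_R$ as groups. Since $|G/N_R|$ was shown to be finite (bounded by $|\mathrm{Sym}(\Gamma/R)|$), the subgroup $N_R[1]$ has finite index in $G$. I do not anticipate a genuine obstacle here; the only point that requires a touch of care is making sure that each manipulation within $N$ uses the congruence property with the correct pair of elements, in particular in the step $(g^{-1}h,1)\in N$ underlying the coset identification $N[g]=gN[1]$.
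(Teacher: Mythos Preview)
Your proof is correct and in fact establishes the general statement about arbitrary congruences $N$, which the paper announces but only verifies in the special case $N=N_R$. The paper's argument works throughout with the concrete definition of $N_R$ via the action on $\Gamma$: for instance, to obtain normality it computes $(g^{-1}kg.x,x)\in R$ for every $x\in\Gamma$, and to get well-definedness and injectivity of the map $gN_R[1]\mapsto N_R[g]$ it again unwinds to statements of the form $(h.x,g.x)\in R$. You instead use only the abstract congruence and equivalence-relation properties of $N$. The payoff of your approach is the clean coset identification $N[g]=gN[1]$, from which bijectivity and the homomorphism property are immediate; the paper, lacking this identification, checks well-definedness and injectivity twice (once for $\eta$, once for $\zeta$) and separately verifies the group axioms on $G/N_R$, all of which are subsumed by your single line $N[g]=gN[1]$.
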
 
\begin{proof}
Let us first see that $N_R[1]\triangleleft_fG$ for all $G$-invariant compatible cofinite entourage $R$ over $\Gamma$. Let $g,h\in N_R[1]$. This implies $(1,g)\in N_R$ and hence $(g,1), (1,h) \in N_R$. Thus $(g,h)\in N_R$. This implies $(g.x,h.x)$ is in $R$, for all $x\in \Gamma$ and so $(x,g^{-1}h.x)\in R$, for all $x\in \Gamma$. Hence, $(1,g^{-1}h)$ is in $N_R$ and thus $g^{-1}h\in N_R[1]$. So, $N_R[1]\leq G$. For all $g\in G$, for all $x\in \Gamma, g.x\in \Gamma$. Hence for all $k\in N_R[1], (x,k.x)\in R$, hence $(k.x,x)$ is in $R$. Thus $(kg.x,g.x)\in R$ and $(g^{-1}kg.x,g^{-1}g.x)=(g^{-1}kg.x,x)\in R$. Hence $(g^{-1}kg,1)\in N_R$. So, $g^{-1}kg\in N_R[1]$ and thus $N_R[1]\triangleleft G$. Now let us define $\eta$ from $G/N_R[1]$ to $G/N_R$ via $\eta(gN_R[1]) = N_R[g]$. Then, $gN_R[1]$ is equal to $hN_R[1]$ if and only if $h^{-1}g\in N_R[1]$ if and only if $(1,h^{-1}g)\in N_R$ if and only if $(x,h^{-1}g.x)\in R$ if and only if $(h.x,g.x)\in R$ if and only if $(h,g)\in N_R$ if and only if $N_R[h]=N_R[g]$, for all $x$ in $\Gamma$. Thus $\eta$ is a well defined injection and hence $\left|G/N_R[1]\right|\leq \left|G/N_R\right|<\infty$. Hence $N_R[1]\triangleleft_fG$. Let us check that $G/N_R$ is a group. For, let $N_R[g_i]$ is in $G/N_R, i=1,2$. Then $N_R[g_1]N_R[g_2]=N_R[g_1g_2]\in G/N_R$. Let $N_R[g_i]$ in $G/N_R$, for $i=1,2,3$. Then $(N_R[g_1]N_R[g_2])N_R[g_3]$ which is equal to $N_R[g_1g_2]N_R[g_3]$ and that equals to $N_R[g_1g_2g_3] = N_R[g_1]N_R[g_2g_3]$ which is equal to $N_R[g_1](N_R[g_2]N_R[g_3])$.  For all $N_R[g]\in G/N_R$, there exists $N_R[1]$ in $G/N_R$, such that $N_R[1] N_R[g] = N_R[g] = N_R[g] N_R[1]$. For all $N_R[g]$ in $G/N_R$, there exists $N_R[g^{-1}]$ in $G/N_R$, such that $N_R[g^{-1}]N_R[g]$ equals to $N_R[g^{-1}g] = N_R[1] = N_R[gg^{-1}] = N_R[g]N_R[g^{-1}]$. Hence our claim. Now let us define $\zeta\from G/N_R[1]\to G/N_R$ via $\zeta(gN_R[1])=N_R[g]$. Then for $g_1, g_2$  in $G, g_1N_R[1] = g_2N_R[1]$ if and only if $g_2^{-1}g_1\in N_R[1]$ if and only if $(1,g_2^{-1}g_1)\in N_R$ if and only if $(x,g_2^{-1}g_1.x)\in R$ if and only if $(g_2.x,g_1.x)\in R$ if and only if $(g_2,g_1)\in N_R$ if and only if $N_R[g_2]$ equals to $N_R[g_1]$. Hence $\zeta$ is a well defined injection. Also for all $N_R[g]$ in $G/N_R$, there exists $gN_R[1]\in G/N_R[1]$ such that $\zeta(gN_R[1]) = N_R[g]$. Thus $\zeta$ is surjective as well. Also for $g_1N_R[1], g_2N_R[1]\in G/N_R[1]$, we have $\zeta(g_1N_R[1]g_2N_R[1]) = \zeta(g_1g_2N_R[1])$ and that equals to $N_R[g_1g_2]$ which equals to $N_R[g_1]N_R[g_2] = \zeta(g_1N_R[1])\zeta(g_2N_R[1])$. Hence $\zeta$ is a group homomorphism and thus a group isomorphism. Also, both $G/N_R[1], G/N_R$, are finite discrete topological groups, so $\zeta$ is an isomorphism of uniform cofinite groups as well.
\end{proof}
\begin{lemma}
The induced uniform topology over $G$ as in Lemma~\ref{equi continuous} is Hausdorff if and only if $G$ acts faithfully over $\Gamma$.
\end{lemma}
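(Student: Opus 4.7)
The plan is to unwind the two conditions in terms of diagonals. Recall that the uniformity on $G$ produced in Lemma~\ref{equi continuous} has the family $\{N_R \mid R \in I\}$ as a fundamental system of entourages, where $N_R = \{(g,h) \in G\times G : (g.x,h.x)\in R \text{ for all } x \in \Gamma\}$ and $I$ is a fundamental system of $G$-invariant compatible cofinite entourages on $\Gamma$. This uniformity is Hausdorff if and only if $\bigcap_{R\in I} N_R = D(G)$, the diagonal of $G$. So the entire lemma reduces to showing that this intersection equals $D(G)$ precisely when the action is faithful.

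For the easy direction (Hausdorff $\Rightarrow$ faithful), I would argue by contrapositive. Suppose the action is not faithful and pick $g \neq 1$ with $g.x = x$ for all $x \in \Gamma$. Then $(g.x, 1.x) = (x,x) \in R$ for every compatible cofinite entourage $R$, and in particular for every $R \in I$. Hence $(g,1) \in \bigcap_{R\in I} N_R$, but $(g,1) \notin D(G)$, contradicting Hausdorffness.

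For the other direction (faithful $\Rightarrow$ Hausdorff), suppose $(g,h) \in \bigcap_{R\in I} N_R$. The key input is that $\Gamma$ is a (Hausdorff) cofinite graph and that, by the earlier lemma, the family $I$ of $G$-invariant compatible cofinite entourages is a fundamental system of entourages for the uniformity on $\Gamma$. Since the underlying uniformity is Hausdorff, $\bigcap_{R \in I} R = D(\Gamma)$. Fixing any $x \in \Gamma$ and letting $R$ range over $I$, the assumption $(g,h) \in N_R$ for all $R$ yields $(g.x, h.x) \in \bigcap_{R \in I} R = D(\Gamma)$, so $g.x = h.x$. As $x$ was arbitrary, $g.x = h.x$ for every $x \in \Gamma$, and applying $h^{-1}$ to both sides gives $(h^{-1}g).x = x$ for every $x$. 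By faithfulness of the action, this forces $h^{-1}g = 1$, i.e. $g = h$, so $(g,h) \in D(G)$ as required.

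The only potentially subtle point is the use of $\bigcap_{R\in I} R = D(\Gamma)$: one must know that reducing to $G$-invariant compatible cofinite entourages does not enlarge the intersection beyond the diagonal. This is not really an obstacle, however, because Lemma~\ref{equi continuous}'s preceding result shows that $I$ is cofinal among all cofinite entourages on $\Gamma$, so their intersection agrees with the intersection over the full Hausdorff uniformity on $\Gamma$, which is $D(\Gamma)$. Once this observation is in place, both implications are short.
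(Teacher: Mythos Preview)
Your proof is correct and follows essentially the same approach as the paper's. The paper argues directly with the pointwise separation condition (given $g\neq h$, find an $R$ with $(g,h)\notin N_R$, and conversely), whereas you package the same logic via the intersection criterion $\bigcap_{R\in I}N_R=D(G)$; but the substantive steps---using faithfulness to produce an $x$ with $g.x\neq h.x$, then invoking Hausdorffness of $\Gamma$ together with the fact that the $G$-invariant compatible cofinite entourages form a fundamental system---are identical in both arguments.
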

\begin{proof}
Let us first assume that $G$ acts faithfully over $\Gamma$. Now let $g\neq h$ in $G$. Then $h^{-1}g\neq 1$. So there exists $x\in \Gamma$ such that $h^{-1}g.x\neq x$ implying that $ g.x\neq h.x$. Then there exists a $G$-invariant compatible cofinite entourage $R$ over $\Gamma$ such that $(g.x,h.x)\notin R$, as $\Gamma$ is Hausdorff. Hence $(g,h)\notin N_R$. Thus $G$ is Hausdorff. 

Conversely, let us assume that $G$  is Hausdorff and let $g\neq 1$ in $G$. Then there exists some $G$-invariant compatible cofinite entourage $R$ over $\Gamma$ such that $(1,g)\notin N_R$. Hence there exists $x\in\Gamma$ such that $(x,g.x)\notin R$. Hence $R[x]\neq R[g.x]$ so that $x\neq g.x$. Our claim follows.
\end{proof}
\begin{lemma}\label{uniform continuous group action} 
Suppose that $G$ is a group acting uniformly equicontinuously on a cofinite graph $\Gamma$ and give $G$ the induced uniformity as in Lemma~\ref{equi continuous}. Then the action $G\times\Gamma\to\Gamma$ is uniformly continuous.
\end{lemma}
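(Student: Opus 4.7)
The plan is to unpack what uniform continuity of $\mu\from G\times\Gamma\to\Gamma$, $(g,x)\mapsto g.x$, actually requires: given any entourage $W$ on $\Gamma$, we must produce an entourage on the product $G\times\Gamma$ (which for the product uniformity may be taken of the form $U\times V$ with $U$ an entourage on $G$ and $V$ an entourage on $\Gamma$) such that $(g_1,g_2)\in U$ and $(x_1,x_2)\in V$ together force $(g_1.x_1,g_2.x_2)\in W$. Since the cofinite entourages form a fundamental system, it is enough to handle the case where $W$ is a compatible cofinite entourage.

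The first step is to invoke the previous lemma to replace $W$ by a smaller $G$-invariant compatible cofinite entourage $R\subseteq W$. The fundamental system of the induced uniformity on $G$ from Lemma~\ref{equi continuous} consists precisely of the cofinite congruences $N_R$, so $N_R$ is then an entourage of $G$. My candidate choice is $U=N_R$ and $V=R$.

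Next I would verify that this choice works. Suppose $(g_1,g_2)\in N_R$ and $(x_1,x_2)\in R$. By the very definition of $N_R$, applied at the point $x_2$, we have $(g_1.x_2,g_2.x_2)\in R$. Since $R$ is $G$-invariant, applying $g_1$ to the pair $(x_1,x_2)\in R$ gives $(g_1.x_1,g_1.x_2)\in R$. Composing these two facts and using that $R$ is an equivalence relation (so $R\circ R=R$) yields $(g_1.x_1,g_2.x_2)\in R\subseteq W$, which is exactly what we need.

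I do not expect a major obstacle; the argument is essentially a two-step interpolation through the common point $g_1.x_2$, with one step controlled by the $G$-invariance of $R$ and the other by the very definition of $N_R$. The subtle point is that both of these hypotheses on $R$ must hold simultaneously, and this is precisely what the preceding lemma on the existence of a fundamental system of $G$-invariant compatible cofinite entourages guarantees. The transitivity of $R$, which allows replacing $R\circ R$ by $R$, is built in because $R$ is a compatible cofinite equivalence relation; without it one would only land in $R\circ R$, but that remains contained in $W$ up to enlarging $W$ and a routine halving argument would still finish the proof.
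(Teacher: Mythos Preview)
Your proposal is correct and follows essentially the same approach as the paper: both take the product entourage $N_R\times R$ for a $G$-invariant compatible cofinite $R$ and reach $(g_1.x_1,g_2.x_2)\in R$ by combining the defining property of $N_R$ with the $G$-invariance of $R$ and its transitivity. The only cosmetic difference is the interpolation point: you pass through $g_1.x_2$ directly, while the paper routes through $h^{-1}g.x$ and then applies $h$, which amounts to the same two-step argument.
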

\begin{proof}
Let $R$ be a $G$-invariant cofinite entourage over $\Gamma$. Now let $((g,x),(h,y))\in N_R\times R$, i.e. $(g,h)\in N_R, (x,y)\in R$. Now $x$ in $\Gamma$ and $(gx,hx)\in R$ this implies $(h^{-1}gx,x)\in R$. We have  $(h^{-1}gx,y)\in R$ and hence $(gx,hy)\in R$. Thus our claim.
\end{proof}

Now if $R\leq S$ in $I$, then $S\subseteq R$. Let $(g_1,g_2)\in N_S$. Then $(g_1x,g_2x)\in S$, for all $x\in\Gamma$ and hence $(g_1x,g_2x)\in R$,for all $x\in \Gamma $  which implies $ (g_1,g_2)\in N_R$. Thus $ N_S\subseteq N_R$. For all $R\leq S$, in $I$, let us define $\psi_{RS}\from G/N_S\to G/N_R$ via $\psi_{RS}(N_S[g])=N_R[g]$. Then $\psi_{RS}$ is a well defined uniformly continuous group isomorphism, as each of $G/N_R, G/N_S$ are finite discrete groups. If $R=S$, then $\psi_{RR}=id_{G/N_R}$. And if $R\leq S\leq T$, then $\psi_{RS}\psi_{ST}=\psi_{RT}$. Then $\{G/N_R\mid R\in I, \psi_{RS}, R\leq S\in I\}$, forms an inverse system of finite discrete groups. Let $\widehat{\Gamma}=\varprojlim_{R\in I}\Gamma/R$ and $\widehat{G}=\varprojlim_{R\in I}G/N_R$, where $\psi_R\from\widehat{G}\to G/N_R$ is the corresponding canonical projection map. Now if $I_{1}, I_{2}$ are two fundamental systems of  $G$-invariant cofinite entourages over $\Gamma$, clearly $I_{1}, I_{2}$ will form fundamental systems of cofinite congruences, for two induced uniformities, over $G$. Now let $N_{R_1}$ be a cofinite congruence over $G$ for some $R_1\in I_{1}$. Then there exists a $R_2$, cofinite entourage over $\Gamma$, such that  $R_2\in I_{2}$ and $R_2\subseteq R_1$. Hence $N_{R_2}\subseteq N_{R_1}$. Now let $N_{S_2}$ be a cofinite congruence over $G$ for some $S_2\in I_{2}$. Then there exists $S_1$, cofinite entourage over $\Gamma$, such that  $S_1\in I_{1}$ and $S_1\subseteq S_2$. Hence $N_{S_1}\subseteq N_{S_2}$. Thus any cofinite congruence corresponding to the directed set $I_{1}$ is a cofinite congruence corresponding to the directed set $I_{2}$ and vice versa.  Thus the two induced uniform structures over $G$ are equivalent and so the completion of $G$ with respect to the induced uniformity, from the cofinite graph $\Gamma$, is unique up to both algebraic and topological isomorphism.

\begin{theorem}
If $G$ acts on $\Gamma$, as in Lemma~\ref{equi continuous}, faithfully then $\widehat{G}$ acts on $\widehat{\Gamma}$ uniformly equicontinuously.
\end{theorem}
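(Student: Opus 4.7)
The plan is to define the action of $\widehat{G}$ on $\widehat{\Gamma}$ as the inverse limit of the level-wise actions $G/N_R\times\Gamma/R\to\Gamma/R$, and then to read off uniform equicontinuity directly from the $G$-invariance of the entourages $R\in I$. From the paragraph preceding the theorem we have $\widehat{\Gamma}=\varprojlim_{R\in I}\Gamma/R$ and $\widehat{G}=\varprojlim_{R\in I}G/N_R$, with canonical projections $p_R\from\widehat{\Gamma}\to\Gamma/R$ and $\psi_R\from\widehat{G}\to G/N_R$. By Lemma~\ref{equi continuous}, for each $R\in I$ the group $G/N_R$ acts on $\Gamma/R$, and for $R\leq S$ in $I$ the bonding maps $G/N_S\to G/N_R$ and $\Gamma/S\to\Gamma/R$ are equivariant with respect to these actions. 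Passing to the inverse limit yields a map $\widehat{G}\times\widehat{\Gamma}\to\widehat{\Gamma}$ satisfying axioms (1)--(5) of Section~\ref{Group Action}, since those axioms hold on every finite quotient and are preserved under inverse limits. For axiom (6), fix a $G$-invariant orientation $E^+(\Gamma)$, note that its images give a compatible family of $G/N_R$-invariant orientations of the finite quotients, and take the inverse limit to obtain a $\widehat{G}$-invariant orientation of $\widehat{\Gamma}$.

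For uniform equicontinuity, I would use that a fundamental system of entourages on $\widehat{\Gamma}$ is given by the sets $\widehat{R}=(p_R\times p_R)^{-1}[\Delta_{\Gamma/R}]$ for $R\in I$. Given any entourage $W$ of $\widehat{\Gamma}$, pick $R\in I$ with $\widehat{R}\subseteq W$ and set $V=\widehat{R}$. For any $\hat{g}\in\widehat{G}$ and any $(\hat{x},\hat{y})\in V$ we have $p_R(\hat{x})=p_R(\hat{y})$; by construction of the inverse-limit action, $p_R$ intertwines the $\widehat{G}$-action on $\widehat{\Gamma}$ with the $G/N_R$-action on $\Gamma/R$, giving $p_R(\hat{g}.\hat{x})=\psi_R(\hat{g})\cdot p_R(\hat{x})=\psi_R(\hat{g})\cdot p_R(\hat{y})=p_R(\hat{g}.\hat{y})$. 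Hence $(\hat{g}\times\hat{g})[V]\subseteq\widehat{R}\subseteq W$ uniformly in $\hat{g}\in\widehat{G}$, which is exactly uniform equicontinuity. Faithfulness enters only indirectly, through the earlier lemmas, to ensure the induced uniformity on $G$ is Hausdorff so that $\widehat{G}$ is a genuine Hausdorff cofinite group extending $G$ densely.

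The main obstacle is the bookkeeping in the first step: verifying that the inverse-limit map really is an action of $\widehat{G}$ on $\widehat{\Gamma}$ in the graph-theoretic sense of Section~\ref{Group Action}, and in particular that axiom (6) can be arranged by coherently propagating a $G$-invariant orientation through all the finite quotients. Once the action is in place, the equicontinuity step is essentially automatic because each $R\in I$ was chosen to be $G$-invariant, so the preimage entourage $\widehat{R}$ in the inverse limit is automatically $\widehat{G}$-invariant.
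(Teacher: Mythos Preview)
Your proposal is correct and follows essentially the same route as the paper: construct the $\widehat{G}$-action on $\widehat{\Gamma}$ as the inverse limit of the finite actions $G/N_R\times\Gamma/R\to\Gamma/R$, verify the graph-action axioms levelwise, and deduce uniform equicontinuity from the $\widehat{G}$-invariance of the pullback entourages $\widehat{R}$ (which the paper writes as $\overline{R}$). Two small points where the paper is more explicit than your sketch: first, it restricts at the outset to those $R\in I$ that are orientation-preserving with respect to the fixed $G$-invariant orientation $E^+(\Gamma)$, which is exactly what is needed for the images of $E^+(\Gamma)$ in the quotients $\Gamma/R$ to be genuine orientations and hence for your axiom~(6) argument to go through; second, the paper also packages the construction as the unique uniformly continuous extension $\widehat{\chi}$ of the dense map $G\times\Gamma\to\widehat{\Gamma}$, and then identifies this extension with the coordinatewise inverse-limit formula---a step you bypass but which buys a one-line proof of uniform continuity of the action as a map $\widehat{G}\times\widehat{\Gamma}\to\widehat{\Gamma}$. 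Your equicontinuity argument via $p_R(\hat{g}.\hat{x})=\psi_R(\hat{g})\cdot p_R(\hat{x})$ is slightly cleaner than the paper's, which first shows $(\hat{g}\times\hat{g})[R]\subseteq\overline{R}$ on the dense subset and then passes to the closure.
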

\begin{proof}

The group $G$ acts on $\Gamma$ uniformly equicontinuously. We fix a $G$-invariant orientation $E^+(\Gamma)$ of $\Gamma$. By Lemma~\ref{uniform continuous group action} the action is uniformly continuous as well. Let $\chi\from G\times\Gamma\to \Gamma$ be this group action. Now since $\Gamma$ is topologically embedded in $\widehat{\Gamma}$ by the inclusion map, say, $i$, the map $i\circ\chi\from G\times\Gamma\to\widehat{\Gamma}$ is a uniformly continuous. Then there exists a unique uniformly continuous map $\widehat{\chi}\from\widehat{G}\times\widehat{\Gamma}\to\widehat{\Gamma}$ that extends $\chi$. We claim that $\widehat{\chi}$ is the required group action. We can take $\widehat{\Gamma} = \varprojlim\Gamma/R$ and $\widehat{G} = \varprojlim G/N_R$, where $R$ runs throughout all $G$-invariant compatible cofinite entourages of $\Gamma$ that are orientation preserving. Then $\widehat{G}\times \widehat{\Gamma} = \varprojlim (G/N_R\times \Gamma/R)$ and $G\times\Gamma$ is defined coordinatewise via $(N_R[g_R])_R.(R[x_R])_R=(R[g_R.x_R])_R$.  If possible  let, $((N_R[g_R])_R,(R[x_R])_R) = ((N_R[h_R])_R,(R[y_R])_R)$. So, $N_R[g_R]$ equals to $N_R[h_R]$ and $R[x_R]=R[y_R], \forall R\in I, (g_R,h_R)\in N_R$ and $(x_R,y_R) \in R$. This implies that $(g_R.x_R,h_R.x_R)\in R$ which further ensures that $(h_R^{-1}g_R.x_R,x_R)\in R$. Then $(h_R^{-1}g_R.x_R,y_R)\in R$ and $(g_R.x_R,h_R.y_R)\in R$. Hence $(R[g_R.x_R])_R = (R[h_R.y_R])_R$. So, the action is well defined. Let $g=(N_R[g_R])_R$ and $h=(N_R[h_R])_R$ in $\widehat{G}$, $x=(R[x_R])_R\in \widehat{\Gamma}$. Now $h.(g.x) = h.(R[g_R.x_R])_R = (R[h_Rg_R.x_R])_R$ which then equals to$(N_R[h_Rg_R])_R.x = (hg).x$. Hence the action is associative. Now $(N_R[1])_R.(R[x_R])_R = (R[1x_R])_R=(R[x_R])_R$. Furthermore for all $v$ equal to $(R[v_R])_R\in V(\widehat{\Gamma})$ and for all $g$ equal to $(N_R[g_R])_R\in \widehat{G}$ one can say that $g.v = (R[g_R.v_R])_R\in V(\widehat{\Gamma})$ as each $g_R.v_R\in V(\Gamma)$. Similarly, for all $e$ equal to $(R[e_R])_R$ in $E(\widehat{\Gamma})$ and for all $g$ equal to $(N_R[g_R])_R$ in $\widehat{G}$, $g.e = (R[g_Re_R])_R$ in $E(\widehat{\Gamma})$. For all $e$ equal to $(R[e_R])_R$ in $E(\widehat{\Gamma})$, for all $g$ equal to $(N_R[g_R])_R$ in $\widehat{G}$, we have $s(g.e) = s((R[g_Re_R])_R)$ and so $(R[g_Rs(e_R)])_R$ equals to $(g.(R[s(e_R)])_R$ and that equals to $g.s(e)$. Hence the properties $t(g.e)=g.t(e)$ and $\overline{g.e}=g.\overline{e}$ follow similarly. Finally, let $E^+(\widehat{\Gamma})$ consists of all the edges $(R[e_R])_R$, where $e_R\in E^+(\Gamma)$. Since each $R$ is orientation preserving, it follows that $E^+(\widehat{\Gamma})$ is an orientation of $\widehat{\Gamma}$. Since $E^+(\Gamma)$ is $G$-invariant, we see that $E^+(\widehat{\Gamma})$ is $\widehat{\Gamma}$-invariant. Hence this is a well defined group action. Also for all $g\in G$, and $x\in \Gamma$, $(N_R[g])_R.(R[x])_R$ equals to $(R[g.x])_R$ which equals to $g.x$ in $\Gamma$. Thus the restriction of this group action agrees with the group action $\chi$. Now $\{R\mid R\in I\}, \{N_R\mid R\in I\}$ is a fundamental system of cofinite entourages over $\Gamma$, is a fundamental system of cofinite congruences over $G$. Hence $\{\overline{R}\mid R\in I\}$ is a fundamental system of cofinite entourages over $\widehat{\Gamma}$ and $\{\overline{N_R}\mid R\in I\}$ is a fundamental system of cofinite congruences over $\widehat{G}$ respectively. Let us now see that the aforesaid group action is uniformly continuous. For let us consider the group action $G/N_R\times\Gamma/R\to\Gamma/R$ defined via $N_R[g]R[x]=R[g.x]$, which is uniformly continuous as both $G/N_R\times\Gamma/R$ and $\Gamma/R$ are finite discrete uniform topological spaces. Hence the group action, $\widehat{G}\times \widehat{\Gamma}\to\widehat{\Gamma}$ is uniformly continuous. Thus the aforesaid group action is our choice of $\widehat{\chi}$, by the uniqueness of $\widehat{\chi}$. So the restriction of the aforesaid action  $\{\widehat{g}\}\times\widehat{\Gamma}\to \widehat{\Gamma}$ is a uniformly continuous map of graphs, for all $\widehat{g}\in \widehat{G}$. We check that for all $(x,y)\in R$ and for all $\widehat{g}\in \widehat{G}$ the ordered pair $(\widehat{g}.x,\widehat{g}.y)\in \overline{R}$ . For, let $\widehat{g}=(N_R[g_R])_R\in \widehat{G}$ and for $x,y\in \Gamma,((R[x])_R, (R[y])_R)\in R$. Now $\overline{R}[(R[g_R.x])_R]=\overline{R}[g_R.x]$ becomes equal to $\overline{R}[g_R.y]=\overline{R}[(R[g_R.y])_R]$. So, $((N_R[g_R])_R(R[x])_R,(N_R[g_R])_R(R[y])_R)\in \overline{R}$. This implies $(\widehat{g}\times\widehat{g})[R]$ is a subset of $\overline{R}$. Thus for all $\widehat{g}\in \widehat{G}$ we observe that $(\widehat{g}\times\widehat{g})[\overline{R}]$ is a sub set of $\overline{\widehat{g}\times\widehat{g}[R]}$ which is a sub set of $\overline{\overline{R}} = \overline{R}$. Hence $\overline{R}$ is $\widehat{G}$ invariant.
\end{proof} 
Thus $\Phi_1=\{N_{\overline{R}}\mid R\in I\}$ and $\Phi_2=\{\overline{N_R}\mid R\in I\}$ form fundamental systems of cofinite congruences over $\widehat{G}$. Let $\tau_{\Phi_1}, \tau_{\Phi_2}$ be the topologies induced by $\Phi_1, \Phi_2$ respectively. 
\begin{theorem}
The uniformities on $\widehat{G}$ obtained by $\Phi_1$ and $\Phi_2$ are equivalent.
\end{theorem}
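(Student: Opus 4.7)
The plan is to establish the stronger statement that $N_{\overline R}=\overline{N_R}$ as equivalence relations on $\widehat G$ for every $R\in I$; once this is done, $\Phi_1$ and $\Phi_2$ coincide term-by-term, so the induced uniformities are literally equal and a fortiori equivalent.

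First I would give a clean description of $\overline{N_R}$. Let $\psi_R\from\widehat G\to G/N_R$ be the canonical projection from $\widehat G=\varprojlim G/N_R$. Because $G/N_R$ is finite and discrete, the set $K_R:=\{(\widehat g,\widehat h)\mid\psi_R(\widehat g)=\psi_R(\widehat h)\}$ is clopen in $\widehat G\times\widehat G$, and its trace on $G\times G$ is precisely $N_R$. Density of $G$ in $\widehat G$, together with closedness of $K_R$, then gives $\overline{N_R}=K_R$.

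Next I would identify $N_{\overline R}$ with the same $K_R$. Writing $\widehat g=(N_R[g_R])_R$, $\widehat h=(N_R[h_R])_R$ and $\widehat x=(R[x_R])_R$, the coordinate-wise formula $\widehat g\cdot\widehat x=(R[g_R\cdot x_R])_R$ from the preceding theorem, together with the fact that $\overline R$ is the preimage under the projection $\pi_R\from\widehat\Gamma\to\Gamma/R$ of the diagonal in $\Gamma/R\times\Gamma/R$, shows that $(\widehat g\cdot\widehat x,\widehat h\cdot\widehat x)\in\overline R$ is equivalent to $R[g_R\cdot x_R]=R[h_R\cdot x_R]$. Requiring this for every $\widehat x\in\widehat\Gamma$ says exactly that $\psi_R(\widehat g)$ and $\psi_R(\widehat h)$ act identically on $\Gamma/R$, since the $R$-coordinates of points of $\widehat\Gamma$ exhaust $\Gamma/R$. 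By the faithfulness of the induced action of $G/N_R$ on $\Gamma/R$ (built into the very definition of $N_R$ in Lemma~\ref{equi continuous}), this forces $\psi_R(\widehat g)=\psi_R(\widehat h)$, i.e.\ $(\widehat g,\widehat h)\in K_R=\overline{N_R}$; the converse direction is immediate from the same coordinate calculation.

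The only step requiring real care is the faithfulness argument, which makes essential use of the fact (spelled out in the discussion surrounding Lemma~\ref{equi continuous}) that $G/N_R$ embeds into $\operatorname{Sym}(\Gamma/R)$ via the induced action, so that ``acting identically on $\Gamma/R$'' collapses to ``being equal in $G/N_R$''. Everything else is a direct appeal to the universal property of the inverse limit and the density of $G$ in $\widehat G$, so no further machinery is needed.
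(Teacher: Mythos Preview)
Your proposal is correct and reaches the same target as the paper---namely $N_{\overline{R}}=\overline{N_R}$ for each $R\in I$---but by a cleaner route. You identify both sides directly with the kernel-pair $K_R$ of the projection $\psi_R\from\widehat{G}\to G/N_R$: the identification $\overline{N_R}=K_R$ comes from $K_R$ being clopen with trace $N_R$ on $G\times G$, and the identification $N_{\overline{R}}=K_R$ comes from the coordinatewise description of $\widehat{\chi}$ together with the faithfulness of the $G/N_R$-action on $\Gamma/R$. The paper instead first obtains the inclusion $\overline{N_R}\subseteq N_{\overline{R}}$ (from $N_{\overline{R}}\cap(G\times G)=N_R$ and closedness of $N_{\overline{R}}$), then builds an explicit bijection $\theta\from\widehat{G}/N_{\overline{S}}\to G/N_S$, and finishes by an index-counting argument comparing $\left|\widehat{G}/N_{\overline{S}}[1]\right|$ with $\left|\widehat{G}/\overline{N_S}[1]\right|$. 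Your version is more direct and makes the role of the embedding $G/N_R\hookrightarrow\operatorname{Sym}(\Gamma/R)$ transparent; the paper's version trades that conceptual step for finite-index bookkeeping, which has the minor advantage of not needing to invoke the description of $\overline{R}$ as the pullback of the diagonal under $\pi_R\times\pi_R$.
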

\begin{proof}
Let us first show that $N_{\overline{R}}\cap G\times G = N_R$. For, let $(g,h)\in N_R$. Then for all $x\in \Gamma, (g.x,h.x)\in R\subseteq \overline{R}$. Now let $(R[x_R])_R\in \widehat{\Gamma}$. Then $\overline{R}[g(R[x_R])_R]=\overline{R}[g.x_R]=\overline{R}[h.x_R]=\overline{R}[h(R[x_R])_R]$ which implies that $(g,h)\in N_{\overline{R}}\cap G\times G$. Thus, $N_R \subseteq  N_{\overline{R}}\cap G\times G$. Again, if $(g,h)$ belongs to $N_{\overline{R}}\cap G\times G$, then for all $x\in \Gamma\subseteq\widehat{\Gamma}$, and so $(g.x,h.x)\in\overline{R}\cap \Gamma\times \Gamma = R$ and this implies $(g,h)\in N_R$. Our claim follows. Then as uniform subgraphs $(G,\tau_{\Phi_1})\cong(G,\tau_{\Phi_2})$, both algebraically and topologically, their corresponding completions $(\widehat{G},\tau_{\Phi_1})\cong(\widehat{G},\tau_{\Phi_2})$, both algebraically and topologically. Since for all $S\in I$, $\psi_S\from G\to G/N_S$ is a uniform continuous group homomorphism and $G/N_S$ is discrete, there exists a unique uniform continuous extension of $\psi_S$, namely, $\widehat{\psi_S}\from\widehat{G}\to G/N_S$. Let us define $\lambda_S\from \widehat{G}\to G/N_S$ via $\lambda_S(g)=N_S[g_S]$, where $g=(N_R[g_R])_R$. Now let $g=(N_R[g_R])_R, h=(N_R[h_R])_R\in\widehat{G}$ be such that $g=h$ which implies that $ N_S[g_S]=N_S[h_S]$ and hence $\lambda_S$ is well defined. Now let $(g,h)\in N_{\overline{S}}$. First of all $N_{\overline{S}}[g_S]=N_{\overline{S}}[g]=N_{\overline{S}}[h]=N_{\overline{S}}[h_S]$. So, $(g_S,h_S)\in N_{\overline{S}}\bigcap G\times G=N_S$. Hence $N_S[g_S]=N_S[h_S]$ which implies that $ \lambda_S(g)=\lambda_S(h)$, so $(\lambda_S(g),\lambda_S(h))\in D(G/N_R)$. Thus $N_{\overline{S}}$ is a sub set of $(\lambda_S\times \lambda_S)^{-1}D(G/N_R)$. Hence $\lambda_S$ is uniformly continuous. Now for all $g, h\in\widehat{G}, \lambda_S(gh)=N_S[g_Sh_S]=N_S[g_S]N_S[h_S]=\lambda_S(g)\lambda_S(h)$ and for all $g\in G, \lambda_S(g)=\lambda_S((N_R[g])_R)=N_S[g]=\psi_S(g)$. Thus $\lambda_S$ is an well defined uniformly continuous group homomorphism that extends $\psi_S$. Then by the uniqueness of the extension, $\widehat{\psi_S}=\lambda_S$. Now $N_{\overline{S}}$ is a closed subspace of $\widehat{G}$, then $\overline{N_{\overline{S}}\cap G\times G} = \overline{N_S}$ which implies that $\overline{N_S}$ is a sub set of $\overline{N_{\overline{S}}}$ which equals to $N_{\overline{S}}$. Let us define $\theta$ from $\widehat{G}/N_{\overline{S}}$ to $G/N_S$ as $\theta$ takes $N_{\overline{S}}[g]$ into $N_S[g_S]$, where $g=(N_R[g_R])_R$. Now $N_{\overline{S}}[g]=N_{\overline{S}}[h]$ in $\widehat{G}/N_{\overline{S}}$ will imply $(g_S,h_S)$ is in $N_{\overline{S}}$ and this implies for all $x$ in $X$ the ordered pair $(g_Sx,h_Sx)$ is in $\overline{S}\bigcap\Gamma\times \Gamma$ which is eventually equal to $S$. Thus $(g_S,h_S)\in N_S$. Then $\theta(N_{\overline{S}}[g])$ equals to $N_S[g_S]$ which is equal to $N_S[h_S]$ and that equals  $\theta(N_{\overline{S}}[h])$. Hence $\theta$ is well defined. On the other hand let $N_{\overline{S}}[g]$, $N_{\overline{S}}[h]$ be such that $\theta(N_{\overline{S}}[g])$ equals $\theta(N_{\overline{S}}[h])$. Thus $N_S[g_S]$ equal to $N_S[h_S]$ implies that $(g_S,h_S)\in N_S\subseteq N_{\overline{S}}$. Hence $N_{\overline{S}}[g]=N_{\overline{S}}[g_S] = N_{\overline{S}}[h_S]=N_{\overline{S}}[h]$. So, $\theta$ is injective as well. Also for all $N_S[g]\in G/N_S$ there exists $N_{\overline{S}}[g]\in\widehat{G}/N_{\overline{S}}$ such that $\theta(N_{\overline{S}}[g])=N_S[g]$. So $\theta$ is surjective. Finally, $\theta(N_{\overline{S}}[g]N_{\overline{S}}[h])$ equals to $\theta(N_{\overline{S}}[gh])$ and that equals to $N_S[g_Sh_S]$ which is $N_S[g_S]N_S[h_S]$ and finally that equals to $\theta(N_{\overline{S}}[g])\theta(N_{\overline{S}}[h])$. So $\theta$ is an well defined group isomorphism, both algebraically and topologically. Hence $\widehat{G}/N_{\overline{S}}\cong G/N_S\cong\widehat{G}/\overline{N_S}$ which implies that $ \left|\widehat{G}/N_{\overline{S}}[1]\right|$ is equal to $\left|\widehat{G}/\overline{N_S}[1]\right|$. But since  $\overline{N_S}\subseteq N_{\overline{S}}$ one obtains $\overline{N_S}[1]\leq N_{\overline{S}}[1]\leq\widehat{G}$ and thus $\left|\widehat{G}/N_{\overline{S}}[1]\right|\left|N_{\overline{S}}[1]:\overline{N_S}[1]\right|$ equals to $\left|\widehat{G}/\overline{N_S}[1]\right|$. Hence $\left|N_{\overline{S}}[1]:\overline{N_S}[1]\right|=1$ which implies that $ N_{\overline{S}}[1]=\overline{N_S}[1]$ and thus $ N_{\overline{S}}=\overline{N_S}$ as each of them are congruences.
Thus our claim.
\end{proof}

\bibliographystyle{amsplain}

\end{document}